\def\NZQ{\mathbb}               
\def\RR{{\NZQ R}}
\def\frk{\mathfrak}               
\def\mm{{\frk m}}
\def\Phi{{\frk N}}
\def\opn#1#2{\def#1{\operatorname{#2}}} 
\opn\chara{char} \opn\length{\ell} \opn\pd{pd} \opn\rk{rk}
\opn\projdim{proj\,dim} \opn\injdim{inj\,dim} \opn\rank{rank}
\opn\depth{depth} \opn\grade{grade} \opn\height{height}
\opn\embdim{emb\,dim} \opn\codim{codim}
\opn\Tr{Tr} \opn\bigrank{big\,rank}
\opn\superheight{superheight}\opn\lcm{lcm}
\opn\trdeg{tr\,deg}
\opn\reg{reg} \opn\lreg{lreg} \opn\ini{in} \opn\lpd{lpd}
\opn\size{size}\opn{\mult}{mult}
\opn\div{div} \opn\Div{Div} \opn\cl{cl} \opn\Cl{Cl}
\opn\Spec{Spec} \opn\Supp{Supp} \opn\supp{supp} \opn\Sing{Sing}
\opn\Ass{Ass} \opn\Min{Min}
\opn\Ann{Ann} \opn\Rad{Rad} \opn\Soc{Soc}
\opn\Syz{Syz} \opn\Im{Im} \opn\Ker{Ker} \opn\Coker{Coker}
\opn\Am{Am} \opn\Hom{Hom} \opn\Tor{Tor} \opn\Ext{Ext}
\opn\End{End} \opn\Aut{Aut} \opn\id{id}
\opn\nat{nat}
\opn\pff{pf}
\opn\Pf{Pf} \opn\GL{GL} \opn\SL{SL} \opn\mod{mod} \opn\ord{ord}
\opn\Gin{Gin}
\opn\Hilb{Hilb}\opn\adeg{adeg}\opn\std{std}\opn\ip{infpt}
\opn\Pol{Pol}
\opn\sat{sat}
\opn\Var{Var}
\opn\aff{aff} \opn\con{conv} \opn\relint{relint} \opn\st{st}
\opn\lk{lk} \opn\cn{cn} \opn\core{core} \opn\vol{vol}
\opn\link{link} \opn\star{star}
\opn\gr{gr}
\def\pot#1#2{#1[\kern-0.28ex[#2]\kern-0.28ex]}
\opn\dirlim{\underrightarrow{\lim}}
\opn\inivlim{\underleftarrow{\lim}}
\let\union=\cup
\let\sect=\cap
\let\Union=\bigcup
\let\To=\longrightarrow
\def\Implies{\ifmmode\Longrightarrow \else
        \unskip${}\Longrightarrow{}$\ignorespaces\fi}
\def\implies{\ifmmode\Rightarrow \else
        \unskip${}\Rightarrow{}$\ignorespaces\fi}
\def\iff{\ifmmode\Longleftrightarrow \else
        \unskip${}\Longleftrightarrow{}$\ignorespaces\fi}
\newtheorem{Theorem}{Theorem}[section]
\newtheorem{Lemma}[Theorem]{Lemma}
\newtheorem{Corollary}[Theorem]{Corollary}
\newtheorem{Proposition}[Theorem]{Proposition}
\newtheorem{Definition}[Theorem]{Definition}
\let\epsilon\varepsilon
\let\phi=\varphi
\let\kappa=\varkappa
\def\qed{\ifhmode\textqed\fi
      \ifmmode\ifinner\quad\qedsymbol\else\dispqed\fi\fi}
\def\textqed{\unskip\nobreak\penalty50
       \hskip2em\hbox{}\nobreak\hfil\qedsymbol
       \parfillskip=0pt \finalhyphendemerits=0}
\def\dispqed{\rlap{\qquad\qedsymbol}}
\opn\dis{dis}
\def\pnt{{\raise0.5mm\hbox{\large\bf.}}}
\opn\Lex{Lex}
\begin{document}

\title{Cohen-Macaulay monomial ideals of  codimension 2}

\author{Muhammad Naeem}
\thanks{}
\subjclass{13C14, 13D02, 13D25, 13P10}
\address{Muhammad Naeem, Abdus Salam School of Mathematical Sciences (ASSMS), GC University, Lahore, Pakistan.
}
\email{naeem\_pkn@yahoo.com}

\begin{abstract}
We give a structure theorem for Cohen-Macaulay monomial ideals of
codimension 2, and describe all possible relation matrices of such
ideals. In case that the ideal has a linear resolution, the relation
matrices can be identified with the spanning trees of a connected
chordal graph with the property that each distinct pair of maximal
cliques of the graph  has  at most one vertex in common.

\vskip 0.4 true cm
 \noindent
  {\it Key words } : Monomial Ideals, Taylor Complexes, Linear Resolutions, Chordal Graphs.
\end{abstract}

\maketitle
\section*{Introduction}
The purpose of the paper is to work out in detail a remark on the
structure of Cohen-Macaulay monomial ideals of codimension 2 which
was made in the paper \cite{BH1}. There it was observed that the
`generic' ideals of this type, generated by $n$  elements,  are in
bijective correspondence to the trees with $n$  vertices. In
Proposition~\ref{lahore} we give an explicit description of the
generators of a generic Cohen-Macaulay monomial ideal of codimension
2 in terms of the associated tree and describe the minimal prime
ideals of such ideals in Proposition~\ref{islamabad}. As a
consequence of these two results we obtain as the main result  of
Section ~1  a full description of all Cohen-Macaulay monomial ideals
of  codimension 2, see Theorem~\ref{juergen}.

In Section~2 we study the possible relation trees of  a
Cohen-Macaulay monomial ideals of  codimension 2. This set of
relation trees is always the set of bases of a matriod
(Proposition~\ref{matroid}), which in case of a generic ideal
consists of only one tree as shown in Proposition~\ref{first}. We
call the graph $G$ whose set of edges is the union of the set of
edges of all relation trees  of a given Cohen-Macaulay monomial
ideal $I$ of  codimension 2, the Taylor graph of $I$. Then each of
the relation trees is a spanning tree of the Taylor graph. The
natural question arises whether the set of relation trees of $I$ is
precisely the set of spanning trees of $G$. We show by an example
that this is not the case in general. On the other hand, we prove in
Theorem~\ref{linear} that each relation tree of $I$ is a spanning
tree of $G$, if $I$ has a linear resolution. In order to obtain a
complete description of all possible relation trees when $I$ has a
linear resolution, it is therefore required to find all possible
Taylor graphs of such ideals. This is done in Theorem~\ref{chordal},
where it is shown that a finite connected simple graph is the Taylor
graph of Cohen-Macaulay monomial ideal of  codimension $2$ with
linear resolution, if and only if $G$ is chordal and any two maximal
cliques of $G$ have at most one vertex in common.

\section{On the structure of Cohen-Macaulay monomial ideals of  codimension 2}

In  \cite[Remark 6.3]{BH1} the following observation was made regarding the structure of a codimension 2  Cohen-Macaulay monomial ideal $I$: let
$$\{u_1,u_2,...,u_{m+1}\}$$ the unique minimal set of monomial  generators of
$I$. Consider the Taylor complex of the sequence
$u_1,u_2,...,u_{m+1}$
\[
...\rightarrow\bigoplus_{i=1}^{m+1\choose 2}S e_i\wedge
e_j\overset{\phi_2}{\rightarrow}\bigoplus_{i=1}^{m+1}S
e_i\overset{\phi_1}{\rightarrow} S
\]
 The matrix corresponding to
$\phi_2$ is of size ${m+1\choose 2}\times m+1$ whose  rows correspond to
Taylor relation  (cf.\ \cite{Ei}), namely to the relations
 \[
 e_i\wedge e_j\mapsto u_{ji}e_j-u_{ij}e_i
 \]
where $i<j$ and $u_{ji}=u_i/\gcd(u_i,u_j)$,
$u_{ij}=u_j/\gcd(u_i,u_j)$.

Let $U=\Ker(\phi_1)$; then the Taylor relations form a homogeneous
system of generators of $U$. Since $\projdim S/I=2$, it follows that
$U$ is free of rank $m$. In particular $U$ is minimally generated by
$m$ elements.  Applying the graded Nakayama Lemma (cf.\ \cite{BH} or
\cite[Lemma 1.2.6]{V}), a minimal system of graded generators of $U$
can be chosen among the Taylor relations. We then obtain a minimal
graded free resolution
\[
0\rightarrow S^{m}\overset{A}{\rightarrow} S^{m+1} \rightarrow S
\rightarrow S/I \rightarrow 0
\]
of $S/I$, where $A$ is a matrix whose rows correspond to Taylor
relations. Any such matrix will be called a {\em Hilbert--Burch matrix} of $I$

Notice that  each row of $A$ has exactly two nonzero
entries.
We obtain a graph $\Gamma$ on the
vertex set $[m+1]=\{1,\ldots,m+1\}$ from the matrix $A$ as follows: we say that
$\{i,j\}$ is an edge of $\Gamma$, if and only if there is a row of
$A$ whose nonzero entries are the $i$th and $j$th components.

We claim that every column of $A$ has a nonzero entry. In fact, if
this would not be the case, say, the $k$th column of $A$ has all
entries zero, then the relation $u_{k+1,k}e_{k+1}-u_{k,k+1}e_k\in U$
could not be written as a linear combination of the minimal graded
homogeneous generators of $U$.  This shows that $\Gamma$ has no
isolated vertex. On the other hand, since the number of vertices of
$\Gamma$ is $m+1$ and the number of edges of $\Gamma$ is $m$, we see
that $\Gamma$ is a tree, which  is called a {\em relation tree} of $I$. The set of all relation trees of $I$ will be denoted by  $\mathcal{T}(I)$.

Conversely, given a tree $\Gamma$ on the vertex set $[m+1]$ with $m\geq 2$, we are going
to construct a codimension 2 Cohen-Macaulay monomial ideal $I$ for which $\Gamma$ is a relation tree.
We assign to $\Gamma$ an $m\times (m+1)$-matrix
$A(\Gamma)=(a_{ij})$ whose entries are either $0$ or indeterminates.
The matrix $A(\Gamma)$ is defined as follows: let $E(\Gamma)$ be the
set of edges of $\Gamma$. Since $\Gamma$ is a tree, there are
exactly $m$ edges. We choose an arbitrary order of the edges of
$\Gamma$, and assign to the $k$th edge $\{i,j\}\in E(\Gamma)$ the
$k$th row of $A(\Gamma)$ by
\begin{eqnarray}
\label{genericmatrix}
a_{kl}= \left\{\begin{array}{ll} -x_{ij}&\text{ if $l =i$,}\\
x_{ji}&\mbox{ if $l=j$,}\\
0&\mbox{ otherwise.}
\end{array}\right.
\end{eqnarray}
For example if $\Gamma$ is the tree with edges $\{1,2\}$, $\{2,3\}$
and $\{2,4\}$. Then we obtain the matrix
\[
A(\Gamma)=
\begin{pmatrix}
 -x_{12} & x_{21} & 0 & 0 \\
0 &- x_{23} & x_{32} & 0 \\
0 & -x_{24} & 0 & x_{42}
\end{pmatrix}
\]

\begin{Definition}{\em Let $\Gamma$  be a tree on the vertex set $[m+1]$ and $i,j$
be two distinct vertices of $\Gamma$. Then there exists a unique
path from $i$ to $j$ denoted by $i \rightarrow j$, in other words a
sequence of numbers $i=i_0,i_1,i_2,\ldots,i_{k-1},i_k=j$ such that
$\{i_l,i_{l+1}\}\in E(\Gamma)$ for $l=0,\ldots,k-1$.
We set \[
b(i,j)=i_{1}\quad\text{and}\quad
e(i,j)=i_{k-1}\]}
\end{Definition}

\begin{Proposition}
\label{lahore}
Let  $v_{j}$ be the minor  of $A(\Gamma)$ which is obtained
 by omitting the $j$th
column of $A(\Gamma)$. Then  $v_j=\pm\prod_{i=1\atop i\neq
j}^{m+1}x_{ib(i,j)}$ for $j=1,2,...,m+1$
\end{Proposition}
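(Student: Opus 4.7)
The plan is to prove the formula by induction on $m$, removing a leaf of $\Gamma$ at each step. The base case $m=1$ is immediate: $\Gamma$ consists of a single edge $\{1,2\}$, the matrix $A(\Gamma)$ is the row $(-x_{12},x_{21})$, and the two $1\times 1$ minors are $v_1=x_{21}=x_{2b(2,1)}$ and $v_2=-x_{12}=-x_{1b(1,2)}$, matching the claim up to sign.

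For the inductive step, pick any leaf $\ell$ of $\Gamma$ with unique neighbor $p$, and let $\Gamma'=\Gamma\setminus\{\ell\}$, a tree on $[m+1]\setminus\{\ell\}$ with $m-1$ edges. The crucial observation is that $A(\Gamma')$ is obtained from $A(\Gamma)$ by deleting the row corresponding to the edge $\{\ell,p\}$ and the column corresponding to $\ell$; moreover, because $\ell$ is a leaf, column $\ell$ of $A(\Gamma)$ has exactly one nonzero entry, namely $\pm x_{\ell p}$ in the row of $\{\ell,p\}$, and this row has exactly one further nonzero entry, $\pm x_{p\ell}$ in column $p$. I split into two cases according to whether $j=\ell$ or not.

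If $j\neq\ell$, I delete column $j$ from $A(\Gamma)$ and expand the resulting $m\times m$ determinant along column $\ell$, which still has its single nonzero entry $\pm x_{\ell p}$. The complementary minor is exactly $A(\Gamma')$ with column $j$ removed, i.e.\ $v'_j$. Hence $v_j=\pm x_{\ell p}\cdot v'_j$. Since $\ell$ is a leaf with neighbor $p$, every path in $\Gamma$ from a vertex $i\neq\ell$ to $j$ lies entirely in $\Gamma'$, so $b(i,j)=b'(i,j)$ for $i\neq\ell$, while $b(\ell,j)=p$. Combining with the inductive expression for $v'_j$ yields
\[
v_j=\pm x_{\ell p}\prod_{i\neq \ell,j}x_{ib'(i,j)}=\pm x_{\ell b(\ell,j)}\prod_{i\neq \ell,j}x_{ib(i,j)}=\pm\prod_{i\neq j}x_{ib(i,j)}.
\]
If instead $j=\ell$, I delete column $\ell$ and expand along the row corresponding to the edge $\{\ell,p\}$, whose only surviving nonzero entry is $\pm x_{p\ell}$ in column $p$. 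The complementary minor is $A(\Gamma')$ with column $p$ removed, namely $v'_p$. By induction $v'_p=\pm\prod_{i\neq \ell,p}x_{ib'(i,p)}$, and since $p$ is the neighbor of the leaf $\ell$ we have $b(i,\ell)=b'(i,p)$ for every $i\neq \ell,p$, together with $b(p,\ell)=\ell$. Therefore $v_\ell=\pm x_{p\ell}\cdot v'_p=\pm\prod_{i\neq\ell}x_{ib(i,\ell)}$, as required.

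The only subtlety is bookkeeping: keeping track that the deletion operations on $A(\Gamma)$ really reproduce $A(\Gamma')$ (up to a permutation of rows coming from the chosen ordering of edges, which only affects the sign), and being careful that the statement is only asserted up to $\pm$, so the cofactor signs can be absorbed throughout. No claim about the global sign needs to be tracked, which makes the induction clean.
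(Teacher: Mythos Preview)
Your proof is correct and follows essentially the same approach as the paper's: induction on the number of edges, removing a leaf and using cofactor expansion to relate the minors of $A(\Gamma)$ to those of $A(\Gamma')$. The only cosmetic difference is that the paper relabels so that the leaf is $m+1$ with neighbor $m$, whereas you work with a general leaf $\ell$ and neighbor $p$; otherwise the recursion and the path arguments are identical.
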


\begin{proof}
We prove the assertion  by using induction on the number of edges of
$\Gamma$.
 If $|E(\Gamma)|=~1$, then
  \[
  A(\Gamma)=(-x_{12},x_{21})
\]
Therefore, $v_{1}=x_{21}$ and $v_2=-x_{12}$, as required.

Now assume that the assertion is true for $|E(\Gamma)|=m-1\geq 1$.
Since $\Gamma$ is a tree, there exists a free
 vertex of $\Gamma$, that is, a vertex which belongs to exactly one edge. Such an
 edge of $\Gamma$ is called a leaf.  We may assume the edge $\{m,m+1\}$ is a leaf
 and that $m+1$ is a free vertex of $\Gamma$. The tree which is obtained from
 $\Gamma$ by removing the leaf $\{m,m+1\}$ will be denoted by $\Gamma'$.
  By our induction hypothesis the minors $v_1',\ldots, v_m'$ of $\Gamma'$ have the
  desired form.
We may assume that the edge $\{m,m+1\}$ is the last in the order of
edges. Then $(m-1)\times m$ matrix $A(\Gamma')$ is obtained from the
$m\times (m+1)$-matrix $A(\Gamma)$ by removing the last row
\[
R_{m}= (0, \ldots, 0,-x_{m,m+1}, x_{m+1,m})
\]
and the last column
\[
\begin{pmatrix} 0 \\ \vdots \\0 \\x_{m+1,m}
\end{pmatrix}
\]
It follows that  the minors $v_1,\ldots, v_{m+1}$  of $A(\Gamma)$
are given by
\begin{eqnarray}
\label{recursion}
v_j=x_{m+1,m}v_j'\quad \text{for} \quad j=1,\ldots, m, \quad
\text{and} \quad v_{m+1}=x_{m,m+1}v_m'.
\end{eqnarray}
Therefore, our induction hypothesis implies that
\[
v_j=x_{m+1,m}v_j'=\pm x_{m+1,m} \prod_{i=1,\;  i\neq
j}^{m}x_{i,b(i,j)}=\pm \prod_{i=1, \; i\neq j}^{m+1}x_{i,b(i,j)}
\]
for $j=1,\ldots,m$, and
\[
v_{m+1}=x_{m,m+1}v_m'=\pm x_{m,
m+1}\prod_{i=1}^{m-1}x_{i,b(i,m)}=\pm x_{m,b(m,m+1)}\prod_{i\neq
i=1}^{m-1}x_{i, b(i,m+1)},
\]
 because $b(i,m)=b(i,m+1)$ for all
$i\leq m$. So this implies that
\[
v_{m+1}=\pm \prod_{i=1,\; i\neq m+1}^{m+1}x_{i,b(i , m+1)},
\]
as desired.
\end{proof}

For a tree $\Gamma$ on the vertex set $[m+1]$ we denote by
$I(\Gamma)$ the ideal generated by the minors $v_1,\ldots, v_{m+1}$
of $A(\Gamma)$ and call it the {\em generic}   monomial ideal attached to the tree $\Gamma$.

\begin{Corollary}
\label{genmon} The ideal $I(\Gamma)$ is a Cohen--Macaulay ideal of
codimension $2$.
\end{Corollary}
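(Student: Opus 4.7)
The plan is to invoke the Hilbert--Burch theorem. Since $A(\Gamma)$ is an $m\times (m+1)$-matrix over the polynomial ring $S$, its ideal of maximal minors $I(\Gamma)$ satisfies $\grade I(\Gamma)\leq 2$ by the standard Eagon--Northcott bound. It therefore suffices to show that $\grade I(\Gamma)\geq 2$; once this is established, Hilbert--Burch guarantees that
\[
0\to S^{m}\overset{A(\Gamma)}{\To} S^{m+1}\To S\To S/I(\Gamma)\To 0
\]
is exact, so $\pd(S/I(\Gamma))=2$, and the Auslander--Buchsbaum formula forces $S/I(\Gamma)$ to be Cohen--Macaulay of codimension $2$.

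To prove the grade bound I would use that $I(\Gamma)$ is a monomial ideal, so every minimal prime of $I(\Gamma)$ is generated by a subset of the indeterminates $\{x_{ij}\}$. A minimal prime of height $1$ would thus be of the form $(x_{ab})$, which requires $x_{ab}$ to divide every generator $v_j$. By Proposition~\ref{lahore},
\[
v_{a}=\pm\prod_{i\neq a} x_{i,b(i,a)},
\]
and every factor on the right--hand side has first subscript different from $a$. Hence $x_{ab}\nmid v_a$, no height--$1$ prime contains $I(\Gamma)$, and $\height I(\Gamma)\geq 2$.

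The only substantive point is this grade inequality, and it reduces, via the monomial structure, to the transparent observation that the first subscripts appearing in $v_a$ never equal $a$. The remainder of the argument is the direct application of Hilbert--Burch to the matrix $A(\Gamma)$ whose maximal minors have been computed in Proposition~\ref{lahore}. I expect no real obstacle; the proof should be only a few lines.
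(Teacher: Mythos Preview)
Your proposal is correct and follows essentially the same strategy as the paper: verify that $I(\Gamma)$ has grade (equivalently height) at least $2$ and then invoke the Hilbert--Burch theorem \cite[Theorem~1.4.17]{BH}. The only cosmetic difference is in how the key inequality is checked: the paper phrases it as $\gcd(v_1,\ldots,v_{m+1})=1$ and appeals to the recursion~(\ref{recursion}) from the proof of Proposition~\ref{lahore}, whereas you use the closed formula $v_a=\pm\prod_{i\neq a}x_{i,b(i,a)}$ directly to see that no single variable $x_{ab}$ divides $v_a$ --- which is exactly the statement that the gcd is $1$.
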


\begin{proof}
The greatest common divisors of the monomial generators $v_j$  of
$I(\Gamma)$ is one. This can easily be seen by the formulas
(\ref{recursion}) in the proof of Proposition~\ref{lahore}. The
assertion follows then from \cite[Theorem 1.4.17]{BH}.
\end{proof}

The generic ideal $I(\Gamma)$  has the following nice primary decomposition:

\begin{Proposition}
\label{islamabad}
 $I(\Gamma)=\bigcap_{1\leq i<
j\leq m+1}(x_{ib(i,j)},x_{je(i,j)})$.
\end{Proposition}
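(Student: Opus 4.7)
The plan is to reduce the proposition to an identification of the minimal primes of $I(\Gamma)$. Each generator $v_k=\pm\prod_{i\neq k}x_{i,b(i,k)}$ of $I(\Gamma)$ is squarefree, since the first indices of its factors are pairwise distinct. Hence $I(\Gamma)$ is a squarefree monomial ideal, so it is radical and equals the intersection of its minimal primes. By Corollary~\ref{genmon}, $I(\Gamma)$ is Cohen--Macaulay of codimension $2$, so each minimal prime is a monomial prime generated by exactly two variables. It therefore suffices to show that these minimal primes are precisely the ideals $P_{ij}:=(x_{i,b(i,j)},x_{j,e(i,j)})$ for $1\leq i<j\leq m+1$.

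For the inclusion $I(\Gamma)\subseteq P_{ij}$, I fix $i<j$ and $k\in[m+1]$ and verify that $v_k\in P_{ij}$. If $k=i$ the factor $x_{j,b(j,i)}=x_{j,e(i,j)}$ appears in $v_i$, and the case $k=j$ is symmetric. If $k\notin\{i,j\}$, let $w$ be the unique vertex of $\Gamma$ at which the three paths $i\to j$, $i\to k$ and $j\to k$ meet. If $w\neq i$, then the paths $i\to j$ and $i\to k$ begin with the same edge, so $b(i,k)=b(i,j)$ and $x_{i,b(i,j)}$ divides $v_k$. If $w=i$, then $i$ lies on the path from $j$ to $k$, so $b(j,k)=b(j,i)=e(i,j)$ and $x_{j,e(i,j)}$ divides $v_k$.

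For the reverse inclusion I must show that every minimal monomial prime of $I(\Gamma)$ is of the form $P_{ij}$. For an edge $\{a,b\}$ of $\Gamma$, removing it splits $\Gamma$ into two components; let $A_a$, respectively $A_b$, denote the component containing $a$, respectively $b$. A direct path argument shows that $x_{a,b}$ divides $v_k$ if and only if $k\in A_b$. Consequently a pair $\{x_{a,b},x_{c,d}\}$ is a vertex cover of the support hypergraph $\{\supp v_k:k\in[m+1]\}$ iff $A_b\cup A_d=[m+1]$, equivalently $A_a\cap A_c=\emptyset$, and it is a \emph{minimal} such cover iff moreover $A_a\neq\emptyset$ and $A_c\neq\emptyset$. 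Setting $i=a$ and $j=c$, the path from $i$ to $j$ must leave $A_a$ through $\{a,b\}$ and enter $A_c$ through $\{c,d\}$, which forces $b=b(i,j)$ and $d=e(i,j)$; hence the minimal prime coincides with $P_{ij}$.

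The main obstacle is the combinatorial bookkeeping in the last paragraph, where one must carefully translate the vertex cover condition on the support hypergraph of the generators into the tree-theoretic description in terms of $b(i,j)$ and $e(i,j)$. The Steiner-point case analysis of the second paragraph is subtler than it appears at first glance, but it amounts to a direct unravelling of the path structure in $\Gamma$ once one notices that for distinct $i,j,k$ the three paths between them share a common meeting vertex.
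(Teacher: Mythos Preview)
Your argument is correct, and it is genuinely different from the paper's. The paper proceeds by induction on $|E(\Gamma)|$: after stripping a leaf $\{m,m+1\}$ it uses the recursion $v_j=x_{m+1,m}v_j'$, $v_{m+1}=x_{m,m+1}v_m'$ to compute the intersection $\bigcap P_{ij}$ directly and match it with $(v_1,\ldots,v_{m+1})$. No appeal to Corollary~\ref{genmon} or to the vertex-cover description of minimal primes is made.

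Your route instead exploits that $I(\Gamma)$ is squarefree (hence equal to the intersection of its minimal primes), feeds in Corollary~\ref{genmon} to force every minimal prime to have height~$2$, and then identifies the height-$2$ monomial primes over $I(\Gamma)$ with the $P_{ij}$ via a vertex-cover argument on the supports of the $v_k$. The key combinatorial observation, that $x_{a,b}\mid v_k$ iff $k$ lies in the $b$-side of $\Gamma\setminus\{a,b\}$, makes this transparent. What you gain is a structural explanation of why exactly the $P_{ij}$ appear; what the paper's induction buys is independence from Corollary~\ref{genmon} and a direct computation that could in principle be adapted to non-squarefree specializations. One small remark: your clause ``moreover $A_a\neq\emptyset$ and $A_c\neq\emptyset$'' is automatic since $a\in A_a$ and $c\in A_c$; the real content is that no single variable covers all the $v_k$, so every $2$-element cover is already minimal, and that a cover $\{x_{a,b},x_{c,d}\}$ forces $a\neq c$ (otherwise $a\in A_a\cap A_c$), so setting $i=a$, $j=c$ is legitimate.
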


\begin{proof}
 We prove the assertion by using  induction on the number of edges of $\Gamma$. For
$|E(\Gamma)|=1$ we have,
\[
A(\Gamma)=( -x_{12},x_{21}).
\]
with $v_1=x_{21}$ , $v_2=-x_{12}$. Therefore
$I(\Gamma)=(x_{21},x_{12})=(x_{1b(1,2)},x_{2e(1,2)})$. Now assume
that assertion is true if  $|E(\Gamma)|=m-1\geq 1$. Since $\Gamma$
is a tree, there exists a free vertex of $\Gamma$, that is, a vertex
which belongs to exactly one edge. Such an edge of $\Gamma$ is
called a leaf.  We may assume the $\{m,m+1\}$ is a leaf and that
$m+1$ is a free vertex of $\Gamma$. The tree which is obtained from
$\Gamma$ by removing the leaf $\{m,m+1\}$ will be denoted by
$\Gamma'$. So then for $A(\Gamma')$ we have
\[
I(\Gamma')=(v'_1,v'_2,...,v'_m)=\bigcap_{1\leq i<j\leq
m}(x_{ib(i,j)},x_{je(i,j)}).
\]

We may assume that the edge $\{m,m+1\}$ is the last in the order of
edges. Then $(m-1)\times m$ matrix $A(\Gamma')$ is obtained from the
$m\times (m+1)$-matrix $A(\Gamma)$ by deleting the last row
\[
R_{m}=(0,  \ldots, 0 , -x_{m,m+1}, x_{m+1,m})
\]
and the last column
\[
\begin{pmatrix} 0 \\ \vdots \\0 \\x_{m+1,m}
\end{pmatrix}
\]
It follows that  the minors $v_1,\ldots, v_{m+1}$  of $A(\Gamma)$
are given by
\[
v_j=x_{m+1,m}v_j'\quad \text{for} \quad j=1,\ldots, m, \quad
\text{and} \quad v_{m+1}=x_{m,m+1}v_m'.
\]

Hence $$I(\Gamma)=(v_1,v_2,...,v_{m+1}).$$

On the other hand, by using the induction hypothesis and the fact that  $e(i,m+1)=m$
for all $i\leq m$, we get
\begin{eqnarray*}
\bigcap_{1\leq i<j\leq m+1}(x_{ib(i,j)},x_{je(i,j)})&= &\bigcap_{1\leq i<j\leq m}(x_{ib(i,j)},x_{je(i,j)})\cap \bigcap_{i=1}^{m}(x_{ib(i,m+1)},x_{m+1,e(i,m+1)})\\
&=&(v'_1,v'_2,...,v'_m)\cap\bigcap_{i=1}^{m}(x_{ib(i,m+1)},x_{m+1,m})\\
&=&(v'_1,v'_2,...,v'_m)\cap(\prod_{i=1}^{m}x_{ib(i,m+1)},x_{m+1,m})\\
&=&(v'_1,v'_2,...,v'_m)\cap(x_{m,m+1}\prod_{i=1}^{m-1}x_{ib(i,m+1)},x_{m+1,m})\\
&=&(v'_1,v'_2,...,v'_m)\cap(x_{m,m+1}v'_{m},x_{m+1,m}).
\end{eqnarray*}
Observing that $\gcd(v_i',x_{m+1,m})=1$ it follows that
\begin{eqnarray*}
(v'_1,v'_2,...,v'_m)&\cap&(x_{m,m+1}v'_{m},x_{m+1,m})\\
&=&(x_{m+1,m}v'_1,x_{m+1,m}v'_2,...,x_{m+1,m}v'_m,x_{m,m+1}v'_{m})\\
&=&(v_1,v_2,...,v_m,v_{m+1})=I(\Gamma).
\end{eqnarray*}
Hence
\[
I(\Gamma)=\bigcap_{1\leq i<j\leq m+1}(x_{ib(i,j)},x_{je(i,j)}),
\]
as desired.
\end{proof}

As an application of Proposition~\ref{lahore},
Corollary~\ref{genmon} and Proposition~\ref{islamabad} we obtain the
following characterization of Cohen-Macaulay monomial ideals of
codimension 2.

\begin{Theorem}
\label{juergen}
{\em (a)} Let $I\subset S=K[x_1,x_2,...,x_n]$ be a Cohen-Macaulay
monomial ideal of codimension $2$ generated by $m+1$ elements.
Then there exists a tree $\Gamma$ with $m+1$ vertices and for each
edge $\{i,j\}$ of $\Gamma$ there  exists a monomials $u_{ij}$ and $u_{ji}$ in $S$ such that
\begin{itemize}
\item[{(i)}] $\gcd(u_{ib(i,j)},u_{je(i,j)})=1$ for all $i<j$, and
\item[{(ii)}] $I=(\prod_{i=2}^{m+1}u_{ib(i,1)},\ldots,\prod_{i=1\atop i\neq
j}^{m+1}u_{ib(i,j)},\ldots,\prod_{i=1}^{m}u_{ib(i,m+1)})$
\end{itemize}
{\em (b)} Conversely, if $\Gamma$ is a tree with $[m+1]$ vertices and for each
$\{i,j\}\in E(\Gamma)$ we are given monomials $u_{ij}$ and $u_{ji}$ in $S$
satisfying {\em (a)(i)}. Then the ideal defined in {\em (a)(ii)} is
Cohen-Macaulay of codimension $2$.
\end{Theorem}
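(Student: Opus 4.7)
The argument splits into the two implications.

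For (a), apply Hilbert--Burch to $I$ to obtain a minimal graded free resolution
\[
0 \to S^m \xrightarrow{A} S^{m+1} \to S \to S/I \to 0.
\]
The discussion preceding the theorem then produces a relation tree $\Gamma$ on $[m+1]$ whose edges correspond to the rows of $A$: the row for edge $\{i,j\}$ has the two nonzero entries $-u_{ij}$ (column $i$) and $u_{ji}$ (column $j$), with $u_{ij}=g_j/\gcd(g_i,g_j)$ and $u_{ji}=g_i/\gcd(g_i,g_j)$, where $g_1,\dots,g_{m+1}$ denote the minimal generators of $I$. The inductive determinantal computation in the proof of Proposition~\ref{lahore} carries over verbatim with these monomials in place of the indeterminates $x_{ij}$, so the $j$th maximal minor of $A$ equals $v_j=\pm\prod_{i\neq j}u_{ib(i,j)}$; Hilbert--Burch identifies $v_j$ with $\pm g_j$, establishing (ii).

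For (i), fix $i<j$ and write $i=i_0,i_1,\dots,i_k=j$ for the path in $\Gamma$. When $k=1$, $\gcd(u_{ij},u_{ji})=1$ is immediate from the formulas above. For $k\ge 2$, the tree Taylor relations minimally generate the $\mathbb{Z}^n$-graded syzygy module $U$ of $(g_1,\dots,g_{m+1})$, so the Taylor relation $T(i_0,i_k)$ admits a $\mathbb{Z}^n$-homogeneous expression $\sum_{l=0}^{k-1}c_l\,T(i_l,i_{l+1})$ with each $c_l$ a monomial. Solving the defining equations gives
\[
c_0 \;=\; \frac{g_{i_k}\gcd(g_{i_0},g_{i_1})}{\gcd(g_{i_0},g_{i_k})\,g_{i_1}},\qquad
c_{k-1} \;=\; \frac{g_{i_0}\gcd(g_{i_{k-1}},g_{i_k})}{\gcd(g_{i_0},g_{i_k})\,g_{i_{k-1}}},
\]
and a variable-by-variable check shows $c_0$ and $c_{k-1}$ are monomials iff $\mu_x(g_{i_1}),\mu_x(g_{i_{k-1}})\le\max(\mu_x(g_{i_0}),\mu_x(g_{i_k}))$ for every variable $x$. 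If $\gcd(u_{ib(i,j)},u_{je(i,j)})\ne 1$, some $x$ would satisfy $\mu_x(g_{i_1})>\mu_x(g_{i_0})$ and $\mu_x(g_{i_{k-1}})>\mu_x(g_{i_k})$, and combined with the two displayed inequalities this produces the strict cycle $\mu_x(g_{i_0})<\mu_x(g_{i_1})\le\mu_x(g_{i_k})<\mu_x(g_{i_{k-1}})\le\mu_x(g_{i_0})$, a contradiction. Hence (i) holds.

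For (b), construct the matrix $A(\Gamma)$ as in (\ref{genericmatrix}); Proposition~\ref{lahore} shows that its $j$th maximal minor is $v_j=\pm\prod_{i\neq j}u_{ib(i,j)}$, so the ideal in (ii) coincides with $I_m(A(\Gamma))$. By \cite[Theorem~1.4.17]{BH}, $S/I$ carries the Hilbert--Burch resolution and is Cohen--Macaulay of codimension $2$ provided $\grade(I)\ge 2$, which in $S$ is equivalent to $\gcd(v_1,\dots,v_{m+1})=1$. I would prove the latter by induction on $|E(\Gamma)|$ as in Proposition~\ref{islamabad}: remove a leaf $\{m,m+1\}$, apply the recursions $v_j=u_{m+1,m}v_j'$ ($j\le m$) and $v_{m+1}=u_{m,m+1}v_m'$ together with the inductive identity $\gcd(v_1',\dots,v_m')=1$ to reduce the claim to $\gcd(u_{m+1,m},u_{m,m+1}v_m')=1$. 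Condition (i) for the tree edge $(m,m+1)$ gives $\gcd(u_{m,m+1},u_{m+1,m})=1$, and for each pair $(i,m+1)$ with $i\le m-1$ (noting $b(i,m+1)=b(i,m)$ and $e(i,m+1)=m$) gives $\gcd(u_{ib(i,m)},u_{m+1,m})=1$, hence $\gcd(v_m',u_{m+1,m})=1$; the two together complete the step. The delicate point is part (a)(i); the remaining steps are direct applications of Hilbert--Burch and the determinantal and coprimality computations already developed in Propositions~\ref{lahore} and~\ref{islamabad}.
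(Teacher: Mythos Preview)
Your treatment of (a)(ii) and of (b) matches the paper's proof essentially line for line: both argue via the substitution $x_{ij}\mapsto u_{ij}$ in $A(\Gamma)$, invoke Proposition~\ref{lahore} for the explicit minors, and for (b) prove $\gcd(v_1,\dots,v_{m+1})=1$ by the same leaf-removal induction before appealing to Hilbert--Burch.

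The genuine difference is in (a)(i). The paper obtains it in one line from Proposition~\ref{islamabad}: substituting $x_{ij}\mapsto u_{ij}$ in the primary decomposition $I(\Gamma)=\bigcap_{i<j}(x_{ib(i,j)},x_{je(i,j)})$ gives $I\subseteq (u_{ib(i,j)},u_{je(i,j)})$ for every pair, so a nontrivial $\gcd$ would force $I$ into a principal ideal, contradicting $\height I=2$. Your route avoids Proposition~\ref{islamabad} entirely, instead exploiting that the tree relations form a $\ZZ^n$-graded \emph{basis} of the free module $U$: expressing $T(i_0,i_k)$ in this basis forces the endpoint coefficients $c_0,c_{k-1}$ to lie in $S$, and the resulting exponent inequalities produce the desired contradiction. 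Your argument is correct and pleasantly self-contained; the paper's is shorter because the work has already been packaged into Proposition~\ref{islamabad}. One small point you should make explicit: when you write $T(i_0,i_k)=\sum_{l=0}^{k-1}c_l\,T(i_l,i_{l+1})$ and read off $c_0$ from the $e_{i_0}$-component, you are tacitly using that no non-path edge of $\Gamma$ contributes to the expression. This is true (a leaf-stripping argument on $\Gamma$ kills all non-path coefficients), but since $i_0$ may well have several neighbours in $\Gamma$, the claim deserves a sentence of justification.
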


\begin{proof}
(a) (ii) Let $A$ be an $m\times m+1$ matrix of Taylor relations
which generated the relation module of $U$ of $I$, and let $\Gamma$
be the corresponding relation tree. We apply the  Hilbert--Burch
Theorem (\cite[1.4.17]{BH}) according to which the ideal  $I$ is
generated by the maximal minors of $A$. The matrix $A$ is obtained
from $A(\Gamma)$ by the substitution:
\[
 x_{ij}\mapsto u_{ij}.
\]
Therefore statement (ii) follows from Proposition~\ref{lahore}.

Now we shall prove  assertion (i). For this we use  Proposition
\ref{islamabad} which says that
\[
I(\Gamma)=\bigcap_{1\leq i<j\leq m+1}(x_{ib(i,j)},x_{je(i,j)}).
\]
Applying the substitution map introduced in the proof of (ii) we
obtain
\begin{eqnarray}
\label{naeem}
I\subseteq\bigcap_{i<j}(u_{ib(i,j)},u_{je(i,j)}).
\end{eqnarray}
Suppose  $\gcd(u_{ib(i,j)},u_{je(i,j)})\neq 1$ for some $i$ and $j$.
Then it follows from (\ref{naeem}) that  $I$ is contained in a
principal ideal. This is a contradiction, because $\height I=2$.

(b) Let $\Gamma$ be a tree with vertex set $[m+1]$ and $m$ edges.
For each  $\{i,j\}\in E(\Gamma)$ we have  monomials $u_{ij},
u_{ji}\in S$ satisfying condition (a)(i). Let $A$ be the matrix
obtained from $A(\Gamma)$ by the substitutions
 $x_{ij}\mapsto u_{ij}$, and let $I$ be the ideal generated by the maximal minors of $A$.
 It follows from Proposition~\ref{lahore} that $I=(v_1,\ldots, v_{m+1})$ where
 $v_j=\prod_{i=1\atop i\neq
j}^{m+1}u_{ib(i,j)}$.

First we
shall prove that
\[
\gcd(v_1,v_2,...,v_{m+1})=1.
\]
We shall prove this by induction on the number of edges of $\Gamma$. The assertion is trivial if $\Gamma$
has only one edge. Now let $|E(\Gamma)|=m>1$ and assume  that the assertion is true for any tree with $m-1$ edges.

We may assume that $(m,m+1)$ is a leaf of $\Gamma$. Let $\Gamma'$ be the tree obtained from
$\Gamma$ by removing the edge $\{m,m+1\}$.  The matrix  $A(\Gamma')$ is  obtained from $A(\Gamma)$
by removing the row $(0,\ldots,-x_{m,m+1},x_{m+1,m})$ and the column
\[
\begin{pmatrix} 0 \\ \vdots \\0 \\x_{m+1,m}
\end{pmatrix}.
\]

Let $A'$ be the matrix obtained from
$A(\Gamma')$ by the substitutions $x_{ij}\mapsto u_{ij}$, and let $I'=(v_1',\ldots,v_m')$ be
the ideal of maximal minors of $A'$ where, up to sign,  $v_j'$ is the $j$th maximal minor of
$A'$. Expanding the matrix $A$ we see that
\[
v_{j}=\pm v'_ju_{m+1,m}\quad \text{for} \quad j=1,2,\ldots,m\quad \text{and}\quad   v_{m+1}=\pm v'_{m}u_{m,m+1}.
\]

Therefore
\[
\gcd(v_1,v_2,\ldots,v_m,v_{m+1})=\gcd(v'_1u_{m+1,m},v'_2u_{m+1,m},...,v'_mu_{m+1,m},v_{m+1}).
\]
By induction hypothesis we have $\gcd(v'_1 , v'_2 ,..., v'_{m} )=1$, so that
\[
\gcd(v'_1u_{m+1,m}, v'_2u_{m+1,m},...,v'_mu_{m+1,m} )=u_{m+1,m}.
\] Hence
 it is enough to prove that
\[
\gcd(u_{m+1,m}, v_{m+1})=1.
\]
Note that $u_{m+1,m}=u_{m+1,e(i,m+1)}$ for all $i$, and $v_{m+1}= \prod_{i=1}^{m}u_{ib(i,m+1)}$. Therefore
\[
\gcd(u_{m+1,m}, v_{m+1})=
\gcd(u_{m+1,e(i,m+1)}, \prod_{i=1\atop}^{m}u_{ib(i,m+1)})=1,
\]
since by our hypothesis (a)(i) we have
$\gcd(u_{m+1,e(i,m+1)}, u_{ib(i,m+1)})=1$ for all
$i$.

The  Hilbert--Burch Theorem  \cite[1.4.17]{BH} then implies that $I$
is a perfect ideal of codimension $2$, and hence a Cohen--Macaulay
ideal.
\end{proof}

\section{The possible sets  of relation trees attached to  Cohen-Macaulay monomial ideals of codimension 2}

In this section we want to study set $\mathcal{T}(I)$ of all
relation trees of a Cohen--Macaulay monomial ideal of codimension 2.
In general one may have more than just one Hilbert--Burch matrix for
an ideal  $I$, and consequently more than one relation trees.
 For example the ideal
$I=(x_4x_5x_6,x_1x_5x_6,x_1x_2x_6,x_1x_2x_3x_5)\subset
 S=K[x_1,x_2,x_3,x_4,x_5,x_6]$ has the following two Hilbert--Burch matrices
\[
A_1=
\begin{pmatrix}
 -x_1 & x_4 & 0 & 0 \\
0 & -x_2 & x_5 & 0 \\
0 & 0 & -x_3x_5 & x_6
\end{pmatrix},
\]

or

\[
A_2=
\begin{pmatrix}
 -x_1 & x_4 & 0 & 0 \\
0 & -x_2 & x_5 & 0 \\
0 & -x_2x_3 & 0 & x_6
\end{pmatrix}.
\]
The corresponding relation trees are $\Gamma_1$ and $\Gamma_2$ with $E(\Gamma_1)=\{\{1,2\},\{2,3\},\{3,4\}\}$
and $E(\Gamma_2)=\{\{1,2\},\{2,3\},\{2,4\}\}$.

However in the generic case we have

\begin{Proposition}
\label{first} Let $\Gamma$ be a tree on the vertex set $[m+1]$ and
let $I(\Gamma)$ be the generic monomial
ideal attached to $\Gamma$.
Then ${\mathcal T}(I(\Gamma))=\{\Gamma\}$.
\end{Proposition}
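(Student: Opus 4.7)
The plan is to show that every Hilbert--Burch matrix of $I(\Gamma)$ arises, up to sign and reordering of rows, from $A(\Gamma)$ itself. The heart of the argument is a total-degree count on Taylor relations.

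First, I would compute the total degree of each Taylor relation. Let $i\neq j$ and write the unique path in $\Gamma$ as $i=i_0,i_1,\ldots,i_k=j$. Since all the $x_{ab}$ are distinct indeterminates, a direct bookkeeping of factors in $v_i$ and $v_j$ (using the formula from Proposition~\ref{lahore}) shows that $x_{\ell,b(\ell,i)}=x_{\ell,b(\ell,j)}$ for every $\ell$ off the interior of the path, while each interior vertex $i_s$ ($1\leq s\leq k-1$) contributes distinct variables $x_{i_s,i_{s-1}}$ and $x_{i_s,i_{s+1}}$ to $v_i$ and $v_j$, respectively. Unwinding this gives
\[
u_{ij}=x_{i_0i_1}x_{i_1i_2}\cdots x_{i_{k-1}i_k},\qquad u_{ji}=x_{i_ki_{k-1}}\cdots x_{i_1i_0}.
\]
In particular, each $v_\ell$ has total degree $m$, and the Taylor relation $u_{ji}e_j-u_{ij}e_i$ has total degree $m+k$, where $k$ is the distance between $i$ and $j$ in $\Gamma$.

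Next, by Proposition~\ref{lahore} and Corollary~\ref{genmon}, the matrix $A(\Gamma)$ is itself a Hilbert--Burch matrix of $I(\Gamma)$, and its rows --- the Taylor relations indexed by the edges of $\Gamma$ --- all live in total degree $m+1$. Consequently the second step of the minimal free resolution of $S/I(\Gamma)$ is $S^m(-m-1)$, and by the uniqueness of graded Betti numbers every minimal system of generators of $U$ must consist of $m$ homogeneous elements of total degree $m+1$.

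Combining these two observations with graded Nakayama (which permits choosing such a minimal system from among the Taylor relations), any Hilbert--Burch matrix of $I(\Gamma)$ must have its $m$ rows drawn from the $m$ Taylor relations of degree $m+1$, which are exactly those indexed by $E(\Gamma)$. Hence for any relation tree $\Gamma'$ of $I(\Gamma)$, $E(\Gamma')\subseteq E(\Gamma)$, and since both are trees on $[m+1]$ with $m$ edges, $\Gamma'=\Gamma$. The main obstacle is the combinatorial verification of the closed form for $u_{ij}$ in the first step; the rest is a routine invocation of graded Nakayama and the invariance of graded Betti numbers of a minimal free resolution.
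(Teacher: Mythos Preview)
Your proof is correct and follows essentially the same strategy as the paper: both arguments observe that $I(\Gamma)$ has a linear resolution (generators of degree $m$, relations of degree $m+1$), count that there are exactly $m$ linear Taylor relations---namely those indexed by $E(\Gamma)$---and conclude uniqueness via graded Nakayama. The only difference is that the paper isolates the key count as a separate lemma (Lemma~\ref{sms}, characterizing edges by the condition $\lcm(v_i,v_j)=v_jx_{ji}=v_ix_{ij}$), whereas you prove the stronger statement that the Taylor relation for the pair $(i,j)$ has total degree $m+k$ with $k$ the graph distance, from which the edge case $k=1$ follows; your explicit formula for $u_{ij}$ along the path is in fact a refinement of what Lemma~\ref{sms} asserts.
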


Recall that $I(\Gamma)$ is the ideal of maximal minors of the matrix
$A(\Gamma)$ defined in (\ref{genericmatrix}).
Up to signs the minors of $A(\Gamma)$ are the
monomials $v_i=\prod_{r=1\atop r\neq i}^{m+1}x_{rb(r,i)}$, see Proposition~\ref{lahore}.

For the proof of Proposition~\ref{first}  we shall need

\begin{Lemma}
\label{sms} Let $\Gamma$ be a tree, then $\{i,j\}$ is an edge of
 $\Gamma$ if and only if
\[
\lcm(v_{i},v_{j})=v_{j}x_{ji}=v_{i}x_{ij}.
\]
\end{Lemma}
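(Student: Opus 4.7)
The plan is to read off the multiset of indeterminates appearing in $v_i$ from the structure of $\Gamma$, so that the problem reduces to bookkeeping with the function $b$. The key preliminary observation is that in $v_i=\prod_{r\neq i}x_{r,b(r,i)}$ the first indices $r$ range bijectively over $[m+1]\setminus\{i\}$, so $v_i$ is squarefree and a given indeterminate $x_{ab}$ divides $v_i$ if and only if $a\neq i$ and $b=b(a,i)$. Assuming now that $\{i,j\}\in E(\Gamma)$, removing this edge splits $\Gamma$ into two subtrees $T_i\ni i$ and $T_j\ni j$; for $r\neq i,j$, say $r\in T_i$, the path from $r$ to $j$ must traverse the edge $\{i,j\}$ and hence agrees with the path from $r$ to $i$ until it reaches $i$, so in particular $b(r,i)=b(r,j)$. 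The symmetric statement handles $r\in T_j$. Together with $b(i,j)=j$ and $b(j,i)=i$, this gives
\[
v_i=x_{ji}\,w, \qquad v_j=x_{ij}\,w, \qquad w:=\prod_{r\neq i,j}x_{r,b(r,i)}.
\]
Each factor of $w$ has first index in $[m+1]\setminus\{i,j\}$, so $w$ is coprime to both $x_{ij}$ and $x_{ji}$, and hence $\lcm(v_i,v_j)=x_{ij}x_{ji}w=x_{ij}v_i=x_{ji}v_j$, as claimed.

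For the converse I would argue by contrapositive: if $\{i,j\}\notin E(\Gamma)$, I produce two distinct indeterminates dividing $v_j$ but not $v_i$, which prevents $\lcm(v_i,v_j)/v_i$ from being any single variable. Let $i=p_0,p_1,\ldots,p_L=j$ be the path in $\Gamma$ from $i$ to $j$; the hypothesis forces $L\geq 2$. The factor of $v_j$ indexed by $r=i$ is $x_{i,p_1}$, and the factor indexed by $r=p_1$ is $x_{p_1,p_2}$. Neither divides $v_i$: the first because no factor of $v_i$ has first index $i$, and the second because the unique factor of $v_i$ with first index $p_1$ is $x_{p_1,b(p_1,i)}=x_{p_1,i}$, whereas $p_2\neq i$ since a path in a tree does not repeat vertices. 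Consequently $\lcm(v_i,v_j)/v_i$ is divisible by the two distinct indeterminates $x_{i,p_1}$ and $x_{p_1,p_2}$, and in particular cannot equal $x_{ij}$.

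There is really no serious obstacle here: once one sees that removing the edge $\{i,j\}$ disconnects the tree and forces $b(r,i)=b(r,j)$ for $r\neq i,j$, the factorizations $v_i=x_{ji}w$ and $v_j=x_{ij}w$ fall out immediately, and the converse is settled by the length-two path obstruction. The only point demanding care is the squarefree bookkeeping of the $x_{ab}$, which is automatic from the bijectivity of the first-index assignment.
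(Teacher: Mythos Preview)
Your proof is correct and follows essentially the same approach as the paper: both directions hinge on comparing $b(r,i)$ with $b(r,j)$, and the forward direction is virtually identical (the paper also observes $b(k,i)=b(k,j)$ for $k\neq i,j$ and factors out the common product). For the converse the paper picks a single interior vertex $k$ on the $i\to j$ path and notes that $x_{k,b(k,j)}$ divides $v_j$ but not $v_i$ and has first index $k\neq i$, hence cannot be $x_{ij}$; your variant with two obstructing variables and a degree count is a slightly more explicit version of the same idea.
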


\begin{proof}
Let $\{i,j\}$ be an edge of $\Gamma$ and suppose that $i<j$. Note
that
\begin{eqnarray}
b(k,i)=b(k,j)
\end{eqnarray}
for all $k$ which are different from $i$ and $j$, because if the
path from $k$ to $i$ is $k=k_0,k_1,\ldots,k_l=i$, then the path from
$k$ to $j$ will be $k=k_0,k_1,\ldots,k_{l-1}=j$ or
$k=k_0,k_1,\ldots,k_{l-1},i,j$  since $\{i,j\}$ be an edge of
$\Gamma$. Now using (4) we have
\[
v_i=\pm\prod_{r=1\atop r\neq i}^{m+1}x_{rb(r,i)}
=\pm\prod_{r=1\atop
r\neq i,j}^{m+1}x_{rb(r,i)}x_{jb(j,i)}
=\pm \prod_{r=1\atop r\neq
i,j}^{m+1}x_{rb(r,i)}x_{ji}
=\pm \prod_{r=1\atop r\neq
i,j}^{m+1}x_{rb(r,j)}x_{ji}.
\]
Similarly $v_j=\pm \prod_{r=1\atop r\neq
i,j}^{m+1}x_{rb(r,j)}x_{ij}$. Hence
$\lcm(v_{i},v_{j})=v_{j}x_{ji}=v_{i}x_{ij}$.

On the other hand, suppose  that $\{i,j\}$ is not an edge of
$\Gamma$, then there exists a vertex, different from $i$ and $j$,
say $k$,  which belongs to the path from $i$ to $j$. Therefore
$b(k,i)\neq b(k,j)$, and hence $x_{kb(k,i)}\neq x_{kb(k,j)}$. Since
$x_{kb(k,i)}\mid v_i$ and since $x_{kb(k,j)}\mid v_j$
we cannot have
$\lcm(v_{i},v_{j})=~v_{j}x_{ji}=v_{i}x_{ij}$.
\end{proof}

\begin{proof}[Proof of Proposition \ref{first}]
Since all monomial generators of $I(\Gamma)$ are of degree $m$ and
since, by the Hilbert--Burch Theorem \cite[1.4.17]{BH},  these
generators are the maximal minors of any of its Hilbert--Burch
matrices, it follows that all Hilbert--Burch matrices must be
linear. However by Lemma~\ref{sms}  we have only $m$ linear Taylor
relations. Therefore there exists only one Hilbert--Burch matrix for
$I$.
\end{proof}

In contrast to  the result stated in Proposition~\ref{first} we have

\begin{Proposition}
\label{second} Let $I=(u_1,\ldots,u_{m+1})$ be the monomial ideal in
$K[x_1,\ldots,x_{m+1}]$ with $u_i=x_1\cdots x_{i-1}x_{i+1}\cdots
x_{m+1}$ for $i=1,\ldots,m+1$. Then ${\mathcal T}(I)$ is the set of
all possible trees on the vertex set $[m+1]$.
\end{Proposition}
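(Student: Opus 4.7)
The plan is to show directly that every tree on $[m+1]$ arises as a relation tree of $I$, by building a Hilbert--Burch matrix for each such tree from the generic template $A(\Gamma)$ of (\ref{genericmatrix}). Since every relation tree of $I$ has $m+1$ vertices by construction, this containment will pin down $\mathcal{T}(I)$ completely.

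First I would compute the Taylor relations for this $I$: because $\gcd(u_i,u_j)=\prod_{k\neq i,j}x_k$, one obtains $u_{ji}=u_i/\gcd(u_i,u_j)=x_j$ and $u_{ij}=x_i$, so every Taylor relation is the \emph{linear} relation
\[
e_i\wedge e_j\;\longmapsto\; x_j e_j-x_i e_i,\qquad i<j.
\]
In particular all Taylor relations are homogeneous of the same degree $m+1$, which is what will permit many distinct subsets of them to minimally generate $U$.

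Now fix an arbitrary tree $\Gamma$ on $[m+1]$ and perform in the template matrix $A(\Gamma)$ the substitution $u_{ij}:=x_i$ for every edge $\{i,j\}\in E(\Gamma)$. Hypothesis (a)(i) of Theorem~\ref{juergen} becomes $\gcd(x_i,x_j)=1$ for $i\neq j$, which is trivial, while Proposition~\ref{lahore} yields
\[
v_j=\pm\prod_{\substack{i=1\\ i\neq j}}^{m+1}x_i=\pm u_j,\qquad j=1,\ldots,m+1,
\]
so the maximal minors of the substituted matrix generate precisely $I$. By Theorem~\ref{juergen}(b) the substituted matrix is then a (minimal) Hilbert--Burch matrix of $I$, and its associated graph on $[m+1]$ is $\Gamma$ by construction; hence $\Gamma\in\mathcal{T}(I)$.

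I do not foresee any real obstacle: the entire argument is a direct application of Theorem~\ref{juergen}(b) once the Taylor relations are observed to be linear. The only point requiring a sanity check is that each substituted row of $A(\Gamma)$ is genuinely the Taylor relation $x_j e_j-x_i e_i$ (so that we really are producing a Hilbert--Burch matrix for $I$, not merely for an ideal with the same generators), and this is immediate from the definition~(\ref{genericmatrix}).
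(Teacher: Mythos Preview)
Your proof is correct and follows essentially the same approach as the paper: both take an arbitrary tree $\Gamma$, substitute $x_{ij}\mapsto x_i$ in the generic matrix $A(\Gamma)$, observe via Proposition~\ref{lahore} that the maximal minors become exactly the $u_j$, and conclude that the resulting matrix is a Hilbert--Burch matrix of $I$ with relation tree $\Gamma$. Your version is slightly more explicit in invoking Theorem~\ref{juergen}(b) and in checking that the rows are genuine Taylor relations, whereas the paper appeals directly to the fact that the minors recover the generators; the underlying argument is the same.
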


\begin{proof}
Let $\Gamma$ be an arbitrary tree on the vertex set $[m+1]$. For the
$k$th edge $\{i,j\}$ of $\Gamma$ take the monomial generators $u_i$
and $u_j$ of $I$. Then we have the Taylor relation $x_je_j-x_ie_i$.
Let $A$ be the $m\times m+1$-matrix whose rows
$(0,\cdots,-x_i,\cdots,x_j,\cdots,0)$ correspond to the Taylor
relations $x_je_j-x_ie_i$ arising from the edges of $\Gamma$.
Observe that the generic matrix  $A(\Gamma)$  is mapped to $A$ by
the substitutions $x_{ij}=x_i$. Moreover the maximal minor $\pm v_i$
of $A(\Gamma)$ is mapped to $u_i$ for all $i$. Therefore the $u_i$
are the maximal minors of $A$ which shows that $A$ is the
Hilbert--Burch matrix of $I$.
\end{proof}

In order to study the general nature of $\mathcal{T}(I)$ we
introduce the following concept. Let $\mathcal S$ be a finite set.
Recall that a collection $\mathcal B$ of subsets of $\mathcal  S$ is
said to be the {\em set of bases of a matroid}, if all  $B\in
\mathcal B$ have the same cardinality and if the following exchange
property is satisfied:

\medskip
\noindent For all  $B_1, B_2\in \mathcal B$ and $i\in B_1\setminus B_2$, there exists $j\in B_2\setminus B_1$ such that
$(B_1\setminus\{i\})\union \{j\}\in\mathcal B$.

\medskip
\noindent A classical example  is the
following: let $K$ be a field, $V$ a $K$-vector space and
${\mathcal S}=\{v_1,\ldots, v_r\}$ any finite set of vectors of $V$. Let
$\mathcal B$  the set of subset $B$ of $\mathcal S$ with the property
that $B$ is a maximal set of linearly independent
vectors in $\mathcal S$. It easy to check and well known
that $\mathcal B$ is the set of bases of a matroid.

\begin{Proposition}
\label{matroid} Let $I\subset S$ be a Cohen--Macaulay monomial ideal
of codimension $2$. Then ${\mathcal T}(I)$ is the set of bases of a
matroid.
\end{Proposition}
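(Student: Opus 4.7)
The plan is to realize $\mathcal{T}(I)$ as the set of bases of a vector matroid inside a finite-dimensional $K$-vector space. Let $U=\Ker(\phi_1)$ be the relation module of $I$; since $\projdim S/I=2$, it is a graded free $S$-module of rank $m$, and as recalled in the setup preceding the statement, the Taylor relations $r_{ij}=u_{ji}e_j-u_{ij}e_i$ for $1\leq i<j\leq m+1$ form a homogeneous system of generators of $U$.

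First I would pass to the graded $K$-vector space $V=U/\mm U$, where $\mm=(x_1,\ldots,x_n)$ is the graded maximal ideal of $S$; it has dimension $m$ over $K$, and the images $\bar r_{ij}$ of the Taylor relations span $V$. By the graded Nakayama Lemma a subset $T\subseteq\{r_{ij}:1\leq i<j\leq m+1\}$ is a minimal system of generators of $U$ if and only if $|T|=m$ and $\{\bar r:r\in T\}$ is a basis of $V$.

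Next I would match this with the definition of a relation tree. A tree $\Gamma$ on $[m+1]$ belongs to $\mathcal{T}(I)$ exactly when the Taylor relations indexed by $E(\Gamma)$ form the rows of some Hilbert--Burch matrix of $I$, i.e.\ exactly when they constitute a minimal generating system of $U$. Combining this with the previous step, the assignment
\[
\Gamma\;\longmapsto\;\{\bar r_{ij}:\{i,j\}\in E(\Gamma)\}
\]
yields a bijection between $\mathcal{T}(I)$ and the bases of the finite vector configuration $\mathcal{V}=\{\bar r_{ij}:1\leq i<j\leq m+1\}\subseteq V$. Since the maximal linearly independent subsets of any finite configuration of vectors in a $K$-vector space form, by the classical example recalled just before the proposition, the set of bases of a matroid, the claim follows.

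The only delicate point, and the one place where the earlier development is actually used, is the verification that every $m$-element subset of Taylor relations whose images form a basis of $V$ really does correspond to a tree on $[m+1]$. This is supplied by the argument already given in the paper that every column of a Hilbert--Burch matrix must contain a nonzero entry: the associated graph then has $m+1$ vertices, $m$ edges, and no isolated vertex, so it is automatically a tree. Everything else, including the exchange property, is inherited for free from the vector matroid on $\mathcal{V}$.
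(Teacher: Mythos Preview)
Your proof is correct and follows essentially the same route as the paper: both reduce via the graded Nakayama Lemma to the vector matroid on the images of the Taylor relations in $U/\mm U$, and then invoke the classical fact that maximal linearly independent subsets of a finite set of vectors form the bases of a matroid. Your explicit remark about the ``delicate point'' (that every minimal generating subset of Taylor relations really yields a tree) is a welcome clarification of something the paper's proof leaves implicit.
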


\begin{proof} Let $I$ be minimally generated by the monomials $u_1,\ldots ,u_{m+1}$ and let

\[
0\To G\To F\To I\To 0
\]
be the graded minimal free $S$-resolution of $S/I$.

The set $\mathcal S$ of  Taylor relations  generate the first syzygy
module $U$ of $I$ which is isomorphic to the free $S$-module $G$.
Consider the graded $K$-vector space $U/\mm U$ where
$\mm=(x_1,\ldots, x_n)$ is the graded maximal ideal of $S$. Note
that $\dim_K U/\mm U=m$. Since the relations $r_{ij}$ generate $U$
it follows that their residue classes $\bar{r}_{ij}$ in the
$K$-vector space  $U/\mm U$ form a system of generators of $U/\mm
U$.  By the homogeneous version of Nakayama (see \cite[1.5.24]{BH})
it follows that a subset $B=\{r_{i_1j_1},\ldots, r_{i_mj_m}\}$ of
the Taylor relations $\mathcal S$ is a minimal set of generators of
$U$ (and hence establishes a Hilbert--Burch matrix of $I$) if and
only if  $\{\bar{r}_{i_1j_1},\ldots, \bar{r}_{i_mj_m}\}$ is a basis
of the $K$-vector space $U/\mm U$. The desired conclusion follows,
since the relation trees of $I$ correspond bijectively to the set of
Hilbert--Burch matrices of $I$.
\end{proof}

Given a finite simple and connected graph $G$. A maximal subtree
$\Gamma\subset G$ is called a {\em spanning tree}. It is well-known
and easy to see that the set ${\mathcal T}(G)$ of spanning trees is
the set of bases of a matroid.

Here we are interested in the spanning trees of  the graph $G(I)$ whose set of edges is given by
with
\[
E(G(I))=\Union_{\Gamma\in \mathcal{T}(I)}E(\Gamma).
\]
We call $G(I)$ the {\em Taylor graph} of $I$. Obviously we have ${\mathcal T}(I)\subset {\mathcal T}(G(I))$. The
question arises whether ${\mathcal T}(I)={\mathcal T}(G(I))$?
Unfortunately this is not always the case as the example at the
beginning of this section shows. Indeed, in this example, ${\mathcal
T}(I)=\{\Gamma_1,\Gamma_2\}$ with
$E(\Gamma_1)=\{\{1,2\},\{2,3\},\{3,4\}\}$ and
$E(\Gamma_2)=\{\{1,2\},\{2,3\},\{2,4\}\}$, so that
$E(G_I)=\{\{1,2\},\{2,3\},\{2,4\},\{3,4\}\}$. This graph has the
spanning trees $\Gamma_1, \Gamma_2$ and $\Gamma_3$ with
$E(\Gamma_3)=\{\{1,2\},\{2,4\},\{3,4\}\}$. If $\Gamma_3$ would be a
relation tree of $I$, then
\[
A=\begin{pmatrix}
 -x_1 & x_4 & 0 & 0 \\
0 & -x_2x_3 & 0 & x_6 \\
0 & 0 & -x_3x_5 & x_6
\end{pmatrix}.
\]
would have to be a Hilbert--Burch matrix of $I$, which is not the case since the ideal of maximal minors of $A$ is the ideal $x_3I$.

However we have

\begin{Theorem}
\label{linear} Let $I$  be Cohen--Macaulay monomial ideal of
codimension $2$ with linear resolution. Then ${\mathcal
T}(I)={\mathcal T}(G(I))$.
\end{Theorem}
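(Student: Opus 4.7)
My plan is to show that every spanning tree $\Gamma$ of $G(I)$ is a relation tree of $I$. Since $I$ has a linear resolution and all $u_i$ share a common degree $d$, the minimal resolution of $S/I$ reads
\[
0\to S(-d-1)^m\to S(-d)^{m+1}\to S\to S/I\to 0,
\]
so $U\cong S(-d-1)^m$. In particular, $U_{d+1}$ is a $K$-vector space of dimension exactly $m$, and because $U$ has no nonzero component in degree $\le d$ we have $(\mm U)_{d+1}=0$; thus $U/\mm U$ is concentrated in degree $d+1$ and coincides with $U_{d+1}$. The target will be to exhibit the $m$ Taylor relations $r_{ij}=u_{ji}e_j-u_{ij}e_i$ attached to edges of $\Gamma$ as a $K$-basis of $U_{d+1}$; graded Nakayama (applied as in the proof of Proposition~\ref{matroid}) will then identify them as a minimal generating set of $U$, and hence as the rows of a Hilbert--Burch matrix of $I$ whose relation tree is $\Gamma$.

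First I would observe that every edge $\{i,j\}\in E(G(I))$ is \emph{linear}, in the sense that $u_{ij}$ and $u_{ji}$ are both variables. Indeed, such an edge belongs to some relation tree of $I$, so $r_{ij}$ is a row of some Hilbert--Burch matrix of $I$, and the linearity of the resolution forces every such row to have linear entries. Consequently each $r_{ij}$ with $\{i,j\}\in E(\Gamma)$ is homogeneous of degree $d+1$ and lies in $U_{d+1}$, and since $\dim_K U_{d+1}=m=|E(\Gamma)|$, it suffices to prove $K$-linear independence of these $m$ relations.

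The hard part will be exactly this linear independence, and my plan is to handle it by induction on $|E(\Gamma)|$ via a leaf-pruning argument. Assume $\sum_{\{i,j\}\in E(\Gamma)}c_{ij}r_{ij}=0$ with $c_{ij}\in K$, and choose a leaf $k$ of $\Gamma$ with unique neighbour $j_0$. The coefficient of $e_k$ in the sum receives a contribution only from $r_{kj_0}$ and equals $\pm c_{kj_0}u_{j_0k}$; since $u_{j_0k}$ is a nonzero variable and $S$ is a domain, $c_{kj_0}=0$. Deleting the leaf yields a tree on $m$ vertices with $m-1$ edges, and the induction hypothesis eliminates the remaining coefficients; the base case of a single edge is immediate.

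The main obstacle is precisely this pruning step: without the linear-resolution hypothesis the entries of $r_{ij}$ could be higher-degree monomials, and the coefficient at a leaf would cease to be a single monomial multiple of $c_{kj_0}$, breaking the induction. This is exactly what goes wrong in the example preceding the theorem, where a spanning tree of $G(I)$ fails to be a relation tree.
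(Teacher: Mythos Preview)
Your proof is correct and matches the paper's approach: both reduce to showing that the $m$ Taylor relations attached to the edges of a spanning tree of $G(I)$ are $K$-linearly independent, after using the linear resolution to identify these relations as elements of $U_{d+1}\cong U/\mm U\cong K^m$; and both establish this independence by peeling off leaves (you by induction on the $e_k$-coefficient at a free vertex, the paper by arranging the rows into an upper-triangular matrix via a free-vertex ordering, which is the same idea in matrix form).

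One small correction to your closing diagnosis: the leaf-pruning argument for $K$-linear independence actually works regardless of linearity, since at a leaf $k$ only one relation contributes to the $e_k$-coefficient, and that contribution is $\pm c_{kj_0}u_{j_0k}$ whether or not $u_{j_0k}$ is a variable. What genuinely fails without the linear-resolution hypothesis is the passage from $K$-linear independence to being a minimal generating set of $U$: when $U$ is not generated in a single degree, some of the Taylor relations on your spanning tree may lie in $\mm U$ (equivalently, their images in $U/\mm U$ vanish), so $m$ $K$-independent relations need not span $U/\mm U$. This is exactly what happens in the paper's counterexample, where the third row of the would-be matrix $A$ is an $S$-combination of lower-degree relations.
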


\begin{proof} Since $I$ has a linear resolution, it follows that all Hilbert--Burch matrices of $I$ are matrices with  linear entries. Let $L=\{r_1,\ldots, r_k\}$ be the set of linear Taylor relations.
We may assume that $r_1,\ldots, r_m$ are the rows of a
Hilbert--Burch matrix of $I$, in other words, that $r_1,\ldots, r_m$
is a basis of the first syzygy module $U$ of $I$.

We first claim that $r_{i_1},\ldots, r_{i_m}\in L$ is basis of $U$
if and only if the relations $r_{i_1},\ldots, r_{i_m}$ are
$K$-linear independent. Obviously, the relations must be $K$-linear
independent in order to form  a basis of the free $S$-module $U$.
Conversely, assume that $r_{i_1},\ldots, r_{i_m}$ are $K$-linear
independent. Since each $r_{i_j}$ belongs to $U$ we can write
\[
r_{i_j}=f_{1j}r_1+f_{2j}r_2+\ldots +f_{mj}r_m \quad \text{with} \quad f_{lj}\in S.
\]
The presentation can be chosen such that all $f_{lj}$ are
homogeneous and such that $\deg f_{lj}r_l=\deg r_{i_j}=1$ for all
$l$ and $j$. In other words, $\deg f_{lj}=0$ for all $l$ and $j$.
Therefore the $m\times m$-matrix $F=(f_{lj})$ is a matrix with
coefficients in $K$. Since, by assumption the relations
$r_{i_1},\ldots, r_{i_m}$ are $K$-linear independent, it follows
that $F$ is invertible. This implies that the relations
$r_1,\ldots,r_m$ are linear combinations of the relations
$r_{i_1},\ldots, r_{i_m}$. Therefore these relations generate $U$ as
well, and in fact form a basis of $U$, since $U$ is free of rank
$m$.

Our considerations so far have shown, that the set of Hilbert--Burch
matrices of $I$ correspond bijectively to the maximal $K$-linear
subsets of $L$. Each $r_i\in L$ is a row vector with exactly two
non-zero entries. We attach to $r_i$ the edge $e_i=\{k,l\}$, if the
two non-zero entries of $r_i$ are at position $k$ and $l$, and claim
that
\[
E(G(I))=\{e_1,\ldots,e_k\}.
\]
Indeed, according to the definition of $G(I)$ an edge $e$ belongs to
$E(G(I))$, if there exists a relation tree $T$ of $I$ with $e\in
E(T)$. This is equivalent to say that there exist linearly
independent $r_{i_1},\ldots, r_{i_m}\in L$ such that $e=e_{i_j}$ for
some $j$. Now choose  $e_i\in \{e_1,\ldots,e_k\}$. Then $r_i$ can be
completed to maximal set $\{r_i, r_{i_2},\ldots,r_{im}\}$ of
$K$-linear elements in $L$. This shows that $e_i\in E(G(I))$ for
$i=1,\ldots,k$, so that $\{e_1,\ldots,e_k\}\subset E(G(I))$. The
other inclusion is trivially true.

In order to complete the proof of the theorem we need to show that
each spanning tree $T$ of $G(I)$ is a relation tree of $I$. Let
$e_{i_1},\ldots,e_{i_m}$ be the edges of the tree. To prove  that
$T$ is a relation tree amounts to show the relations
$r_{i_1},\ldots, r_{i_m}$ are $K$-linearly independent.

A free vertex of $T$ is a vertex which belongs to exactly one edge.
Since $T$ is a tree, it has at least one free vertex. Say, $1$ is
this vertex and  $e_{i_1}$ is the edge to which the free vertex $1$
belongs.  Removing the edge $e_{i_1}$ from $T$ we obtain a  tree
$T'$  on the vertex set $\{2,3,\ldots,m+1\}$. After renumbering the
vertices and edges if necessary, we may assume that $2$ is a free
vertex of $T'$ and $e_{i_2}$ the edge to which $2$ belongs.
Proceeding in this way we get, after a suitable renumbering of the
vertices and edges of $T$, a free vertex ordering of the edges, that
is, for all $j=1,\ldots,r$ the edges
$e_{i_j},e_{i_{j+1}},\ldots,e_{i_m}$ is the set of edges of a tree
for which $j$ is a free vertex belonging to $e_{i_j}$. Since
renumbering of vertices and of edges of $T$ means for the
corresponding matrix of relations simply permutation of the rows and
columns, the rank of relation matrix is unchanged. However in this
new ordering, if we skip the last column of  the $m\times m+1$
relation matrix we obtain an upper triangular $m\times m$ matrix
with non-zero entries on the diagonal. This shows that the relations
$r_{i_1},\ldots, r_{i_m}$ are $K$-linearly independent, as desired.
\end{proof}

Finally we will describe all the possible Taylor graphs of a
Cohen--Macaulay monomial ideal of codimension $2$  with linear
resolution. Then, together with Theorem~\ref{linear}, we have a
complete description of all possible relation trees for such ideals.

Let $G$ be finite connected simple graph on the vertex set $[n]$. Recall that a subset $C$ of $[n]$ is called a {\em clique} of $G$ if for
all $i$ and $j$ belonging to $C$ with $i \neq j$ one has $\{ i , j
\} \in E(G)$. The set of all cliques  $\Delta(G)$ is a simplicial complex, called the {\em clique complex} of $G$.

\begin{Theorem}
\label{chordal} Let $G$ be finite connected simple graph.
Then the following are equivalent:
\begin{enumerate}
\item[{\em (a)}] $G$ is a Taylor graph of a Cohen--Macaulay monomial ideal of codimension $2$  with linear
 resolution.

\item[{\em (b)}] $G$ is a chordal graph with the property that any two distinct maximal cliques have at most one vertex in common.
\end{enumerate}
\end{Theorem}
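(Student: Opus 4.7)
My plan is to prove the two implications separately. The direction (b) $\Rightarrow$ (a) is an explicit construction by induction on the number of maximal cliques, while (a) $\Rightarrow$ (b) uses the structure theorem (Theorem~\ref{juergen}) applied to a spanning tree of $G(I)$ and requires a combinatorial analysis of the forced equalities among the variables $u_{ij}$.

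For (b) $\Rightarrow$ (a), I induct on the number $r$ of maximal cliques of $G$. The base case $r = 1$ gives $G = K_n$, and the ideal of Proposition~\ref{second} has Taylor graph $K_n$. For $r \geq 2$, chordality together with the quasi-tree condition implies that the maximal cliques of $G$ can be organized as the vertex set of a tree (the clique tree), so I pick a leaf $F_r$ that shares at most one vertex $v^\ast$ with $G' = G - (F_r \setminus \{v^\ast\})$. By induction there is an ideal $I' \subset S'$ with Taylor graph $G'$. Introducing one fresh variable $y_v$ per vertex $v \in F_r$, I set $u_v := u_v^{\mathrm{old}} \cdot \prod_{w \in F_r \setminus \{v^\ast\}} y_w$ for $v \in V(G')$, and $u_w := u_{v^\ast}^{\mathrm{old}} \cdot \prod_{v \in F_r,\, v \neq w} y_v$ for $w \in F_r \setminus \{v^\ast\}$. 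Taking as relation tree of $I := (u_v : v \in V(G))$ the old relation tree of $I'$ augmented by the star at $v^\ast$ inside $F_r$, a direct verification shows that condition (i) of Theorem~\ref{juergen} holds, so by Theorem~\ref{juergen}(b), $I$ is Cohen--Macaulay of codimension $2$. Linearity of the resolution is immediate since every entry of the Hilbert--Burch matrix is a variable, and a pair-by-pair inspection of the generators shows $G(I) = G$.

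For (a) $\Rightarrow$ (b), fix $\Gamma \in \mathcal{T}(I)$; by Theorem~\ref{linear} this is a spanning tree of $G$. Theorem~\ref{juergen}(a) attaches variables $u_{ij}$ to the edges of $\Gamma$ satisfying $\gcd(u_{i\, b(i,j)}, u_{j\, e(i,j)}) = 1$ for all $i < j$. A substituted analogue of Lemma~\ref{sms} shows: if $i$ and $j$ are joined in $\Gamma$ by the path $i = k_0, k_1, \ldots, k_r = j$, then $\{i, j\} \in E(G)$ if and only if the two length-$r$ products $\prod_{s=1}^{r} u_{k_s, k_{s-1}}$ and $\prod_{s=0}^{r-1} u_{k_s, k_{s+1}}$ share all but one factor after cancellation of repeated variables. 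Using this I argue in two steps: first, if $G$ contained a chordless cycle of length $k \geq 4$, the chordlessness of the intermediate pairs along the cycle would forbid precisely the collisions required by condition (i) for the cycle-edges, producing a contradiction; second, if two maximal cliques $F, F'$ of $G$ shared two vertices, then, choosing $a \in F \setminus F'$ and $b \in F' \setminus F$, one extracts from the primary decomposition in Proposition~\ref{islamabad} an associated prime of height $\geq 3$, contradicting the unmixedness of a Cohen--Macaulay ideal of codimension $2$.

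The main obstacle is the chordality step. For an induced $4$-cycle $v_1, v_2, v_3, v_4$ in $G$, take a spanning tree $\Gamma$ of $G$ containing the length-$3$ path from $v_1$ to $v_4$ through $v_2$ and $v_3$; the closing edge $\{v_1, v_4\}$ of $G$ forces two collisions among the six variables on that path, while the non-edges $\{v_1, v_3\}$ and $\{v_2, v_4\}$ rule out most candidate collisions by condition~(i), and a finite case check leaves no admissible configuration. The general case $k \geq 5$ is analogous but requires more elaborate bookkeeping of how the $k - 2$ forced collisions distribute themselves along the length-$(k-1)$ path in $\Gamma$ without creating a chord.
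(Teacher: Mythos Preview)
Your (b)$\Rightarrow$(a) is correct and a genuine alternative to the paper's argument. The paper builds a single explicit Hilbert--Burch matrix from a quasi-forest ordering of the cliques and then checks Theorem~\ref{juergen}(a)(i) case by case; you instead peel off one leaf clique at a time and graft a ``Proposition~\ref{second}--block'' onto the inductively constructed ideal. Both work, and yours has the advantage that the verifications of condition~(i) and of $G(I)=G$ are local to the glued clique.

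Your (a)$\Rightarrow$(b), however, has two real problems.

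First, the clique-intersection step is not justified. Proposition~\ref{islamabad} gives the primary decomposition of the \emph{generic} ideal $I(\Gamma)$; after the substitution $x_{ij}\mapsto u_{ij}$ you only get the containment $I\subseteq\bigcap_{i<j}(u_{i\,b(i,j)},u_{j\,e(i,j)})$, and in the linear case each ideal on the right is a height-$2$ prime by condition~(i). There is no height-$3$ associated prime to extract, so the unmixedness contradiction you announce does not exist. In the paper this step is obtained for free from the chordality argument: once one knows that the vertex set of \emph{every} cycle in $G(I)$ spans a complete subgraph, two distinct maximal cliques sharing two vertices $i,j$ are merged into a single clique by running through the $4$-cycles $i,k,j,l$ with $k\in G_1$, $l\in G_2$.

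Second, your chordality argument is at best a promise. For $k=4$ one can indeed carry out the finite check you describe (condition~(i) together with the non-edges $\{v_1,v_3\},\{v_2,v_4\}$ kills every one of the three admissible collision patterns), but for $k\ge 5$ you give no argument, and the bookkeeping of how $k-2$ forced collisions distribute over a path of length $k-1$ without producing a chord is not routine. The paper avoids all of this by a one-line linear-algebra observation: the $k$ linear Taylor relations $r_1,\dots,r_k$ attached to a $k$-cycle in $G(I)$ must be $K$-linearly \emph{dependent} (otherwise they would extend to a $K$-basis of $U/\mm U$, hence to a relation tree containing the cycle). Writing $r_i=-a_i\epsilon_i+b_i\epsilon_{i+1}$, the dependence forces $a_1=b_k$ and $a_i=b_{i-1}$, and then every partial sum $r_1+\cdots+r_i=-a_1\epsilon_1+b_i\epsilon_{i+1}$ is again a linear Taylor relation, giving all the chords at once. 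This is the key idea you are missing; it replaces your spanning-tree-and-case-analysis strategy entirely and simultaneously yields the stronger statement (every cycle spans a clique) needed for the clique-intersection step.
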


\begin{proof} (a)\implies (b): Let $I$ be generated by $m$ monomials and $G=G(I)$, and let $C$  be a cycle of $G$. We first show that the restriction $G^\prime$ of $G$ to $C$ is a complete graph, that is, we show that for any two distinct vertices $i,j\in C$ it follows that $\{i,j\}\in E(G)$. In particular, this will imply that $G$ is chordal.

For simplicity we may assume that $E(C)=\{e_1,\ldots, e_k\}$ with
$k\geq 3$ and  $e_i=\{i,i+1\}$ for $i=1,\ldots, k-1$ and
$e_k=\{k,1\}$. Let $r_1,\ldots,r_k$ be the corresponding relations.
Let $\epsilon_i\in K^{m-1}$, $i=1,\ldots m-1$  be the canonical
basis vectors of $K^{m-1}$. Then
$r_i=-a_i\epsilon_i+b_i\epsilon_{i+1}$ for $i=1,\ldots,k-1$ and
$r_k=-b_k\epsilon_1+a_k\epsilon_{k}$, where $a_i$ and $b_i$ belong
to $\{x_1,\ldots,x_n\}$. Assume that $r_1,\ldots, r_k$ are
$K$-linearly independent. Then $r_1,\ldots,r_k$ can be completed to
$K$-basis $r_1,\ldots,r_m$ of $L$. (Here we use the notation
introduced in the proof of Theorem~\ref{linear}.) Let $\Gamma$ be
the tree corresponding to $r_1,\ldots,r_m$. Then $C$ is a subgraph
of $\Gamma$, which is a contradiction. Thus we see that the
relations $r_1,\ldots,r_k$ are $K$-linearly dependent which implies
at once that $a_1=b_k$ and $a_i=b_{i-1}$ for $i=2,\ldots,k$. Hence
we have $r_1+\cdots+ r_i=-a_1\epsilon_1+b_{i}\epsilon_{i+1}$ for
$i=1,\ldots,k-1$.  This implies that $\{1,i\}$ is an edge of $G$ for
$i=2,\ldots k$. By symmetry, also the other edges $\{i,j\}$ with
$2\leq i<j\leq k$ belong to $G$.

Now let $G_1$ and $G_2$ be two distinct maximal  cliques of $G$, and
assume that they have two vertices in common, say,  the vertices $i$
and $j$. Let $k\in G_1\setminus \{i,j\}$ and $l\in G_2\setminus
\{i,j\}$. Then the graph $C$ with edges $\{i,k\}, \{k,j\}, \{j,l\},
\{l,i\}$ is a cycle in $G$. Therefore, by what we have shown, it
follows that $\{k,\l\}$ is an edge of $G$. Thus for any two
vertices  $k, l\in V(G_1)\union V(G_2)$ it follows that $\{k,l\}\in
E(G)$, contradicting the fact that $G_1$ and $G_2$ are distinct
maximal cliques of $G$.

(b)\implies (a): Let $C_1,\ldots, C_r$ be the maximal cliques of the
chordal graph $G$, and let $\Delta(G)$ be the clique complex of $G$.
Then the $C_i$ are the facets of  $\Delta(G)$.  One version of
Dirac's theorem \cite{D}  says that $\Delta(G)$ is a quasi-forest,
see \cite{HHZ}. This means, that there is an order of the facets,
say,  $C_1,C_2,\ldots, C_r$ such that for each $i$ there is a $j<i$
with the property that $C_k\sect C_i\subset C_j\sect C_i$ for all
$k<i$. Given this order, then our hypothesis (b) implies that for
each $i=2,\ldots,r$ there exists a vertex $k_i\in C_i$ such
$C_i\sect C_{i-1}=\{k_i\}$ and $C_i\sect C_j=\{k_i\}$ for all $j<i$
with $C_i\sect C_j\neq \emptyset$. The following example illustrates
the situation. Let $G$ be the graph on the vertex set $[7]$ with
edges $\{1,2\}, \{1,3\}, \{2,3\}, \{3,4\}, \{3,5\}, \{4,5\},
\{5,6\}, \{5,7\}$. 




Then $G$ is a connected simple graph satisfying the condition in
(b). The  maximal cliques of $G$ ordered as above are
$C_1=\{1,2,3\}$, $C_2=\{3,4,5\}$, $C_3=\{5,6\}$ and $C_4=\{5,7\}$
and intersection vertices are $k_2=3$, $k_3=5$ and $k_4=5$.

After having fixed the order of the cliques, we may assume that the
vertices of $G$ are labeled as follows: if $|C_1\union \cdots\union
C_i|=s_i$, then $C_1\union \cdots\union C_i=\{1,2,\ldots,s_i\}$. In
other words, $C_1=\{1,\ldots,s_1\}$ and $C_i\setminus
\{k_i\}=\{s_{i-1}+1,\ldots, s_i\}$ for $i>1$. The vertices on the
graph in Figure~1 are labeled in this way. Now we let $\Gamma\subset
G$ be the spanning tree of $G$ whose edges are $\{j,k_2\}$ with
$j\in C_1$ and $j\neq k_2$, and for $i=1,\ldots,r$ the edges
$\{j,k_i\}$ with $j\in C_i$ and $j\neq k_i$. In our example the
edges of $\Gamma$ are $\{1,3\}$, $\{2,3\}$, $\{3,4\}$,$\{3,5\}$,
$\{5,6\}$ and $\{5,7\}$.




Let $m+1=s_r$. Then $m+1$ is the number of vertices of $G$. We now
assign to $\Gamma$ the following $m\times m+1$-matrix $A$ whose rows
$r_e$ correspond to the edges $e$ of $\Gamma$ as follows: we set
$r_e=-x_{1j}\epsilon_j+x_{1k_2}\epsilon_{k_2}$ for $e=\{j,k_2\}$ and
$j\in C_1$ with $j\neq k_2$, and we set
$r_e=-x_{ij}\epsilon_j+x_{ik_i}\epsilon_{k_i}$ for $e=\{j,k_i\}$ and
$j\in C_i$ with $j\neq k_i$ and $i>1$. Here $\epsilon_i$ denotes the
$i$th canonical unit vector in $\RR^{m+1}$.

The rows $r_e$  can be naturally ordered according to the size of $j$ in the edge $e=\{j,k_i\}$. Thus in our example we obtain the matrix
\[
\begin{pmatrix}
-x_{11} & 0 & x_{13} & 0 & 0 & 0 & 0\\
0 & -x_{12} & x_{13} & 0 & 0 & 0 & 0\\
0 & 0 & x_{23} & -x_{24} & 0 & 0 & 0\\
0 & 0 & x_{23} & 0 & -x_{25} & 0 & 0\\
0 & 0 & 0 & 0 & x_{35} & -x_{36} & 0\\
0 & 0 & 0 & 0 & x_{45} & 0 & -x_{47}\\
\end{pmatrix}
\]
Our next goal is to show that our matrix $A$ is a Hilbert--Burch
matrix. We apply Theorem~\ref{juergen}. Tor each edge $\{i,j\}\in
\Gamma$ the monomials $u_{ij}$ and $u_{ji}$ are, according to the
choice of $A$, the following:
\[
u_{jk_2}=-x_{1j},\quad u_{k_2j}=x_{1k_2}\quad \text{for}\quad  j<k_2,
\]
and for $i=2,\ldots, r$
\[
u_{k_ij}=x_{ik_i},\quad u_{jk_i}=-x_{ij} \quad \text{for}\quad k_i<j,\quad j\in C_i.
\]
According to Theorem~\ref{juergen}(b) we have to show that
$\gcd(u_{i b(i,j)},u_{j e(i,j)})=1$ for all $i<j$. Assume first that
$i,j\not\in\{k_2,\ldots,k_r\}$. Then $u_{i b(i,j)}=-x_{ti}$ for
$i\in C_t$ and $x_{j e(i,j)}=-x{sj}$ for$j\in C_s$. Thus in this
case $\gcd(u_{i b(i,j)},u_{j e(i,j)})=1$. In the second case let
$i\not\in\{k_2,\ldots,k_r\}$ and $j\in\{k_2,\ldots,k_r\}$, let say
$j=k_s$. Then $b(i,j)=k_t$ for $i\in C_t$ and so
$u_{ib(i,j)}=-x_{ti}$. Suppose $\{i,j\}$ is not an edge then
$e(i,j)=b(j,i)=b(k_s,i)$ is either  $k_{s+1}$  or  $k_{s-1}$.  Then
$u_{je(i,j)}$ is either $-x_{(s)j}$   or  $-x_{(s-1)j}$. On the
other hand,  if $\{i,j\}$ is an edge, then $e(i,j)=i$, and   so
$u_{j e(i,j)}=u_{ji}=u_{k_si}=x_{sj}$. Thus in this case, too,
$\gcd(u_{i b(i,j)},u_{j e(i,j)})=1$. Finally assume that
$i,j\in\{k_2,\ldots,k_r\}$, and let
$i=k_{s_1},k_{s_1+1},\ldots,k_{s_2}=j$ be the path from $i$ to $j$.
Then $b(i,j)=k_{s_1+1}$ and $e(i,j)=k_{s_2-1}$ so
$u_{ib(i,j)}=x_{s_1i}$ and $u_{je(i,j)}=-x_{(s_2-1)j}$. Thus again
in this case we have $\gcd(u_{i b(i,j)},u_{j e(i,j)})=1$. Thus in
all cases $\gcd(u_{i b(i,j)},u_{j e(i,j)})=1$ for all $i<j$, as
desired.

Let $I$ be the codimension $2$ ideal whose relation matrix is $A$,
and let $\{i,j\}$ be any edge of $G$. It remains to be shown that
there exits a relation tree $\Gamma'$ with $\{i,j\}\in E(\Gamma')$.
If $\{i,j\}\in E(\Gamma)$, we are done. Now assume that
$\{i,j\}\not\in E(\Gamma)$. We may assume that $\{i,j\}\in C_t$. Let
$s=t$ if $t>1$, and $s=2$ if $t=1$.  We replace the row
$-x_{tj}\epsilon_j+x_{tk_s}\epsilon_{k_s}$ of $A$ by the difference
of the rows
\[
-x_{ti}\epsilon_i+x_{tj}\epsilon_{j}=(-x_{ti}\epsilon_i+x_{tk_s}\epsilon_{k_s})-
(-x_{tj}\epsilon_j+x_{tk_s}\epsilon_{k_s}),
\]
and leave all the other rows of $A$ unchanged. The new matrix $A'$
is again a relation matrix of $I$ and the tree $\Gamma'$
corresponding to $A'$ is obtained from $\Gamma$ by removing the edge
$\{j,k_s\}$ and adding the edge $\{i,j\}$. This completes the proof
of the theorem.
\end{proof}

\end{document}